\newcounter{cnt1}
\newcounter{cnt2}
\newcommand{\blr}{\begin{list}{$(\roman{cnt1})$}
    {\usecounter{cnt1} \setlength{\topsep}{0pt}
        \setlength{\itemsep}{0pt}}}
\newcommand{\bla}{\begin{list}{$($\alph{cnt2}$)$}
    {\usecounter{cnt2} \setlength{\topsep}{0pt}
        \setlength{\itemsep}{0pt}}}
\newcommand{\el}{\end{list}}
\newtheorem{thm}{Theorem}
\newtheorem{cor}[thm]{Corollary}
\newtheorem{ex}[thm]{Example}
\newtheorem{Def}[thm]{Definition}
\newtheorem{prop}[thm]{Proposition}
\newtheorem{rem}[thm]{Remark}
\newcommand{\Rem}{\begin{rem} \rm}
\newcommand{\bdfn}{\begin{Def} \rm}
\newcommand{\edfn}{\end{Def}}
\begin{document}
\large
\title[ Subdifferentiability]{Subdifferentiability and polyhedrality of the norm}
\author[Rao]{T. S. S. R. K. Rao}
\address[T. S. S. R. K. Rao]
{Department of Mathematics\\
Shiv Nadar University \\
Delhi (NCR) \\ India,
\textit{E-mail~:}
\textit{srin@fulbrightmail.org}}
\subjclass[2000]{Primary 46 B20, 46B25  }
 \keywords{
Strong subdifferentiability of the norm, Radon-Nikodym property, weakly Hahn-Banach smooth spaces, function algebras, $L^1$-predual spaces.
 }

\begin{abstract}
Let $X$ be an infinite dimensional real Banach space. In this paper, motivated by the work of Contreras and Pay$\acute{a}$ (\cite{CP}) we study Banach space properties that are necessary or sufficient to ensure subdifferentiability of the norm at all unit vectors.\end{abstract}
  \maketitle
\section { Introduction}
Let $X$ be a Banach space. We recall from \cite{FP} that the norm is said to be strongly subdifferentiable (SSD) at a unit vector $x$, if $lim_{t \rightarrow 0^+}\frac{\|x+ty\|-\|x\|}{t}$ exists uniformly for $y$ in the unit ball $X_1$ of $X$. Let $S_X$ denote the unit sphere. Clearly if $x$ is a SSD point, so is $-x$. Also when $X$ is canonically embedded in its bidual $X^{\ast\ast}$, $x$ is a SSD point in $X$ if and only if it is a SSD point of $X^{\ast\ast}$.
\vskip 1em
It is easy to see that the norm  on any finite dimensional Banach space is strongly subdifferentiable. It is known that in a Banach algebra, the norm is SSD at the  identity (see \cite{FP}) but like in $C([0,1])$, this (and its negative) can be the only point of SSD for the norm (see \cite{CP}, Theorem 2).
\vskip 1em
We recall from that $X$ is said to be weakly Hahn-Banach smooth, if every norm-attaining functional in $S_{X^\ast}$ is a point of weak$^\ast$-weak continuity for the identity map on $S_{X^\ast}$. See Page 147, Lemma III.2.14 and several examples from Chapter III of \cite{HWW} where this condition is satisfied. Theorem 2 in \cite{CP} shows for $C^\ast$-algebras, the equivalence of weakly Hahn-Banach smoothness and strong subdifferentiability of the norm (they assume the stronger hypothesis of coincidence of weak$^\ast$ and weak topologies on $S_{X^\ast}$, but it is clear from the proof that the coincidence at norm attaining functionals in $S_{X^\ast}$ is enough). We are interested in exploring similar results in the commutative set-up, including in concrete function algebras , Banach spaces $X$ whose dual is isometric to $L^1(\mu)$, for a positive measure $\mu$, called $L^1$-predual spaces. See \cite{L}.
\vskip 1em
We say that a Banach space has the weak Namioka-Phelps property, if every norm attaining functional in $S_{X^\ast}$ is a point of weak$^\ast$-norm continuity for the identity map on $S_{X^\ast}$. See \cite{R2} . It is known that for any Hilbert space $H$, the space of compact operators ${\mathcal K}(H)$ has this property, see \cite{LM}. In the commutative case, it is easy to see that for any discrete set $\Gamma$, $c_0(\Gamma)$ has this property.
See also Chapter VI of \cite{HWW} for more examples among non-reflexive Banach spaces. 
\vskip 1em

We first show that for $C^\ast$-algebras, weak Hahn-Banach smoothness is equivalent to weak Namioka-Phelps property.
We also show that for any Banach space $X$ with this property, norm is SSD on $S_X$.
\vskip 1em
These notions play an important role in various branches of Analysis. To give an application to approximation theory, we recall (an equivalent formulation) from \cite{GI} that a proximinal subspace $Y \subset X$ (i.e., $P(x)=\{y \in Y: d(x,Y)=\|x-y\|\}$ is non-empty for all $x \in X$) is said to be strongly proximinal, if for every $x \in X$, every sequence $\{y_n\}_{ n \geq 1} \subset Y$ such that $d(x,Y)= lim\|x-y_n\|$, there is a subsequence $\{z_{n_k}\} \subset P(x)$ such that $\|y_{n_k}-z_{n_k}\|\rightarrow 0$.
\vskip 1em
It was shown in \cite{GI} that  $x^\ast \in S_{X^\ast}$ is a SSD point if and only if $ker(x^\ast)$ is a strongly proximinal subspace of $X$. An important problem is to decide, when is every weak$^\ast$-closed finite codimensional subspace is strongly proximinal in $X^\ast$?
\vskip 1em
Thus in view of our remarks above, subdifferentiability of the norm on $S_{X}$ is equivalent to every weak$^\ast$-closed (hence proximinal) hyperplane in $X^\ast$ is strongly proximinal.
\vskip 1em
For any $x \in S_X$, the state space $S_x = \{x^\ast \in S_{X^\ast}: x^\ast(x)=1\}$ is a weak$^\ast$-closed exposed (by $x$) face of $X^\ast_1$. It is well known that (see \cite{DGZ}) the subdifferential limit $lim_{ t \rightarrow 0^+}\frac{\|x+ty\|-\|x\|}{t}$ exists and is $sup\{x^\ast(y): x^\ast \in S_X\}$. $x$ is a SSD point if and only if $x$ strongly exposes $S_x$, in the sense, for any sequence $\{x^\ast_n\}_{ n \geq 1} \subset X^\ast_1$, $x^\ast_n(x) \rightarrow 1$ implies $d(x^\ast_n,S_x) \rightarrow 0$. See \cite{FP}. See also \cite{R3} for applications of these ideas in spaces of operators.
\vskip 1em
If $\rho: S_X \rightarrow 2^{X^\ast}$ defined by $\rho(x)=S_x$ is the duality map, then  an equivalent formulation of $x$ being a SSD point is that, it is a point of norm-norm upper-semi-continuity for $\rho$. This means, given $\epsilon >0$, there is a $\delta>0$ such that for $z\in S_X$ with $\|x-z\|< \delta$, $S_z \subset S_x+ \epsilon X^\ast_1$. See \cite{FP}.
\vskip 1em
The results presented here are valid in either scalar field. For convenience we most often work with real scalars.
\section{Main Results}
We note that the properties of norm being SSD and the weak Namioka-Phelps properties are hereditary. By an application of the Bishop-Phelps theorem, if a separable Banach space has the weak Namioka-Phelps property, then $X^\ast$ is a separable Banach space. Thus if $X$ has the weak Namioka-Phelps property, every separable subspace has separable dual. Therefore $X^\ast$ has the Radon-Nikodym property (see \cite{DU})
. These deductions allow us to deal with sequences in the following theorem.
\begin{thm}
Let $X$ be a Banach space satisfying weak Namioka-Phelps property. The norm on $X$ is SSD.	
\end{thm}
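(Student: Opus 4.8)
The plan is to verify strong subdifferentiability pointwise using the sequential characterization recalled in the Introduction: $x \in S_X$ is an SSD point if and only if every sequence $\{x_n^\ast\} \subset X_1^\ast$ with $x_n^\ast(x) \to 1$ satisfies $d(x_n^\ast, S_x) \to 0$. So I fix $x \in S_X$, take an arbitrary such sequence, and aim to show $d(x_n^\ast, S_x) \to 0$. Since $|x_n^\ast(x)| \le \|x_n^\ast\| \le 1$ forces $\|x_n^\ast\| \to 1$, after discarding finitely many terms and replacing $x_n^\ast$ by $x_n^\ast/\|x_n^\ast\|$ I may assume $x_n^\ast \in S_{X^\ast}$; this perturbation changes $d(x_n^\ast, S_x)$ by at most $\big|1-\|x_n^\ast\|\big| \to 0$, so it is harmless.

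The decisive point is to extract a weak$^\ast$-convergent subsequence. Here I would invoke the deductions recorded before the statement: the weak Namioka-Phelps property forces $X^\ast$ to have the Radon-Nikodym property, equivalently $X$ is an Asplund space, and the dual ball of an Asplund space is weak$^\ast$ sequentially compact. This is exactly what lets us ``deal with sequences,'' since $X$ itself need not be separable and a diagonal argument would only produce convergence on a separable subspace rather than a genuine weak$^\ast$ limit in $X^\ast$. Thus any subsequence of $\{x_n^\ast\}$ has a further subsequence $\{x_{n_k}^\ast\}$ converging weak$^\ast$ to some $x^\ast \in X_1^\ast$.

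Next I identify the limit. Since $x_{n_k}^\ast(x) \to 1$, weak$^\ast$ convergence gives $x^\ast(x)=1$, and with $\|x^\ast\| \le 1 = \|x\|$ this forces $\|x^\ast\|=1$; hence $x^\ast \in S_x \subset S_{X^\ast}$ and, crucially, $x^\ast$ attains its norm at $x$. By the weak Namioka-Phelps property $x^\ast$ is therefore a point of weak$^\ast$-norm continuity for the identity on $S_{X^\ast}$. Applying this to the sequence $x_{n_k}^\ast \in S_{X^\ast}$ with $x_{n_k}^\ast \to x^\ast$ weak$^\ast$ yields $\|x_{n_k}^\ast - x^\ast\| \to 0$, whence $d(x_{n_k}^\ast, S_x) \le \|x_{n_k}^\ast - x^\ast\| \to 0$.

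Finally I upgrade this to the full sequence by the usual subsequence principle: I have shown that every subsequence of the nonnegative scalar sequence $d(x_n^\ast, S_x)$ admits a further subsequence tending to $0$, so $d(x_n^\ast, S_x) \to 0$. As $x \in S_X$ was arbitrary, every unit vector is an SSD point and the norm on $X$ is SSD. The only real obstacle is the subsequence extraction: weak$^\ast$ sequential compactness of $X_1^\ast$ can fail for general non-separable $X$, and it is precisely the Radon-Nikodym/Asplund consequence of the hypothesis that removes it; once a weak$^\ast$ limit is in hand, its automatic membership in the norm-attaining state space $S_x$ makes the weak Namioka-Phelps property directly applicable.
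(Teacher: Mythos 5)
Your proof is correct and follows essentially the same route as the paper: weak$^\ast$ accumulation points of the sequence lie in $S_x$, hence attain their norm at $x$, and the weak Namioka-Phelps property upgrades weak$^\ast$ convergence to norm convergence, forcing $d(x^\ast_n, S_x) \rightarrow 0$. You simply make explicit the details the paper compresses --- the normalization into $S_{X^\ast}$, the weak$^\ast$ sequential compactness of the dual ball coming from the Asplund/Radon-Nikodym deduction (which is exactly what the paper's preceding remark about being allowed to ``deal with sequences'' refers to), and the final subsequence principle.
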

\begin{proof}
	Let $x \in S_x$. We need to show that $x$ strongly exposes $S_x$. Let $\{x^\ast_n\}_{n \geq 1} \subset X^\ast_1$ and $x^\ast_n(x) \rightarrow 1$. Clearly all weak$^\ast$ accumulation points of this sequence are in $S_x$. Since every point of $S_x$ is a point of weak$^\ast$-norm continuity, we get that $d(x^\ast_n,S_x) \rightarrow 0$.
\end{proof}
Let $Y \subset X$ be a proximinal subspace. Consider the quotient space $X/Y$. If a unit vector $x^\ast \in Y^\bot$ attains its norm, $d(x,Y)=1 =x^\ast(x)$. Since $Y$ is proximinal, let $1 = d(x,Y)=\|x-y\|=x^\ast(x-y)$ so that $x^\ast$ attains its norm on $X$. Now it is easy  to see that if $X$ has the weak Hahn-Banach smooth or Namioka-Phelps property, so does the quotient space $X/Y$.
\begin{rem}
	We recall from Chapter 1 of \cite{HWW} that a closed subspace $Y\subset X$ is a $M$-ideal, if there is a linear projection $P: X^\ast \rightarrow X^\ast$ such that $ker(P) = Y^\bot$ and $\|x^\ast\|=\|P(x^\ast)\|+\|x^\ast - P(x^\ast)\|$ for all $x^\ast \in X^\ast$. Such a subspace is a proximinal subspace. Consequently if $X$ has the weakly Hahn-Banach smooth or Namioka-Phelps property, so does $X/Y$.
\end{rem}
\vskip 1em
$M$-ideals in $C^\ast$-algebras are precisely closed two-sided ideals and in $L^1$-predual spaces, the quotient by a $M$-ideal is again a $L^1$-predual space.
\vskip 1em

We next show that the converse of the above theorem is valid in $C^\ast$-algebras. 
\vskip 1em
For a family $\{X_{\alpha}\}_{\alpha \in \Delta}$ of Banach spaces indexed by a set $\Delta$, we denote the $c_0$-sum by $X = \bigoplus_{c_0} X_{\alpha}$. In what follows we omit writing the index set.
\begin{prop} Suppose each $X_{\alpha}$ has the weak Namioka-Phelps property. Then $X$ has the weak Namioka-Phelps property.
	
\end{prop}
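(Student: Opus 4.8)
The plan is to exploit the identification $X^\ast = \bigoplus_{\ell^1} X_\alpha^\ast$, under which the weak$^\ast$ topology on bounded subsets of $X^\ast$ is precisely the coordinatewise weak$^\ast$ topology: a net $(y^{\ast\beta})$ converges weak$^\ast$ to $x^\ast = (x_\alpha^\ast)$ if and only if $y_\alpha^{\ast\beta} \to x_\alpha^\ast$ weak$^\ast$ in each $X_\alpha^\ast$ (test against vectors supported in a single coordinate). I would fix a norm-attaining $x^\ast \in S_{X^\ast}$, attained at some $x = (x_\alpha) \in S_X$, and verify directly that every net in $S_{X^\ast}$ converging weak$^\ast$ to $x^\ast$ converges to $x^\ast$ in norm; since the weak$^\ast$ topology need not be metrizable here, I would work with nets throughout.

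The first, and decisive, step is a structural observation forced by the $c_0$-sum. Writing $1 = x^\ast(x) = \sum_\alpha x_\alpha^\ast(x_\alpha) \le \sum_\alpha \|x_\alpha^\ast\| = 1$, equality must hold in every summand, so $x_\alpha^\ast(x_\alpha) = \|x_\alpha^\ast\|$ for all $\alpha$. When $x_\alpha^\ast \ne 0$ this forces $\|x_\alpha\| = 1$; but $(\|x_\alpha\|)_\alpha \in c_0(\Delta)$, so only finitely many coordinates can have norm one. Hence $x^\ast$ is supported on a finite set $F$, and for each $\alpha \in F$ the normalized functional $x_\alpha^\ast/\|x_\alpha^\ast\|$ lies in $S_{X_\alpha^\ast}$ and attains its norm at $x_\alpha$. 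This reduces the problem to finitely many coordinates, each carrying a genuine norm-attaining functional to which the hypothesis on $X_\alpha$ applies.

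Next, given a net $(y^{\ast\beta}) \subset S_{X^\ast}$ with $y^{\ast\beta} \to x^\ast$ weak$^\ast$, I would first pin down the coordinate masses. Weak$^\ast$ lower semicontinuity of the norm gives $\liminf_\beta \|y_\alpha^{\ast\beta}\| \ge \|x_\alpha^\ast\|$ for every $\alpha$, while $\sum_\alpha \|y_\alpha^{\ast\beta}\| = 1 = \sum_{\alpha \in F} \|x_\alpha^\ast\|$. A short $\liminf/\limsup$ argument over the finite set $F$ (passing to a subnet on which each $\|y_\alpha^{\ast\beta}\|$ converges and using that the total mass never exceeds one) then yields $\|y_\alpha^{\ast\beta}\| \to \|x_\alpha^\ast\|$ for each $\alpha \in F$. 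With the coordinate masses controlled I can normalize: for $\alpha \in F$ the unit vectors $y_\alpha^{\ast\beta}/\|y_\alpha^{\ast\beta}\|$ converge weak$^\ast$ to $x_\alpha^\ast/\|x_\alpha^\ast\|$, so the weak Namioka-Phelps property of $X_\alpha$ delivers norm convergence, whence $y_\alpha^{\ast\beta} \to x_\alpha^\ast$ in the norm of $X_\alpha^\ast$. Finally, conservation of total mass forces the tail to vanish, $\sum_{\alpha \notin F} \|y_\alpha^{\ast\beta}\| = 1 - \sum_{\alpha \in F} \|y_\alpha^{\ast\beta}\| \to 0$, and combining the finitely many coordinate estimates with the tail estimate gives $\|y^{\ast\beta} - x^\ast\| \to 0$, as required.

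I expect the main obstacle to be the middle step, upgrading the lower-semicontinuity bound $\liminf \|y_\alpha^{\ast\beta}\| \ge \|x_\alpha^\ast\|$ to genuine convergence of each coordinate mass; this is exactly where the rigidity of the $\ell^1$-norm must be used, namely that the total mass is pinned at one, with no mass allowed to leak into the tail or to shift among the finitely many coordinates of $F$. It is also what makes the subsequent normalization and the application of the hypothesis on each $X_\alpha$ legitimate. The finite-support observation of the second paragraph is the conceptual key that confines the whole argument to finitely many coordinates; without it the tail contribution could not be controlled.
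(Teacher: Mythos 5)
Your proof is correct and follows essentially the same route as the paper's: reduce to the finite support of the norm-attaining functional forced by the $c_0$/$\ell^1$ duality, show the coordinate norms of a weak$^\ast$-convergent net in $S_{X^\ast}$ must converge (the $\ell^1$ rigidity step), normalize, and invoke the weak Namioka-Phelps property of each $X_\alpha$. The only difference is presentational: the paper performs a without-loss-of-generality reduction to a two-summand case $M \bigoplus_\infty N$ and treats the zero-component case separately, whereas you run the same argument globally over all coordinates with the tail estimate made explicit, which if anything is a more complete rendering of the same idea.
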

\begin{proof}
	Let $\Lambda \in \bigoplus_1 X^\ast_{\alpha}=X^\ast$ be a unit vector attaining its norm. It is easy to see that $\Lambda$ has only finitely many non-zero coordinates and at each non-zero coordinate $\alpha$, the corresponding functional in the coordinate space $X^\ast_{\alpha}$ attains its norm.
	Thus we assume without loss of generality that $X = M \bigoplus_{\infty} N$ and both $M,~N$ have the weak Namioka-Phelps property.
	\vskip 1em
	Suppose $(m^\ast,n^\ast) \in S_X$ is norm attaining. $\|m^\ast\|+\|n^\ast\| =1$, suppose both components are non-zero and $m^\ast(m)+n^\ast(n)=1=max\{\|m\|,\|n\|\}$.
	$\frac{m^\ast}{\|m^\ast\|},~\frac{n^\ast}{\|n^\ast\|}$ attain norm at unit vectors $m~,n$ respectively.
	Let  nets $m^\ast_{\gamma} \rightarrow m^\ast$, $n^\ast_{\gamma} \rightarrow n^\ast$ in the weak$^\ast$-topology, $lim\{\|m^\ast_{\gamma}\|+\|n^\ast_{\gamma}\|\}=1$. Since $lim\{m^\ast_{\gamma}(m)+n^\ast_{\gamma}(n)\}=1$ we get that $\frac{m^\ast_{\gamma}}{\|m^\ast_{\gamma}\|} \rightarrow \frac{m^\ast}{\|m^\ast\|}$ in the weak$^\ast$ topology of $M^\ast$ and a similar statement is true of $\frac{n^\ast}{\|n^\ast\|}$. Thus by assumption on the component spaces, we get the conclusion. Similar arguments work, when one of the components is zero.
\end{proof}
\begin{thm}
	A $C^\ast$ algebra $A$ has the weak Hahn-Banach smooth property if and only if it has the weak Namioka-Phelps property.
\end{thm}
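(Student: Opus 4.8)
The plan is to prove the two implications separately, the reverse one being immediate. If $A$ has the weak Namioka--Phelps property then every norm-attaining functional in $S_{A^\ast}$ is a point of weak$^\ast$-norm continuity, and since norm convergence forces weak convergence, it is \emph{a fortiori} a point of weak$^\ast$-weak continuity; hence $A$ is weakly Hahn--Banach smooth. All the work is in the forward implication, and I would organise it around a structure theorem for the algebra.

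Assume $A$ is weakly Hahn--Banach smooth. By Theorem 2 of \cite{CP} the norm is then SSD on $S_A$. The heart of the proof is the claim that such an $A$ must be a \emph{dual} $C^\ast$-algebra, i.e. $\ast$-isomorphic to a $c_0$-direct sum $\bigoplus_{c_0}\mathcal{K}(H_\alpha)$ of elementary $C^\ast$-algebras. The first half of this is to see that $A$ is scattered --- equivalently that the enveloping von Neumann algebra $A^{\ast\ast}$ is atomic and $A^\ast$ has the Radon--Nikodym property --- which one expects from the smoothness/SSD hypothesis in the spirit of the deductions preceding Theorem 1. The decisive half is to rule out accumulation of the minimal central projections of $A^{\ast\ast}$: weak$^\ast$-weak continuity --- and not merely weak$^\ast$ continuity --- at the norm-attaining functionals forces the elementary blocks to sit as an honest $c_0$-sum rather than being glued at limit points. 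The abelian prototype is that weak Hahn--Banach smoothness of $C_0(\Omega)$ forces $\Omega$ to be discrete, so that $A=c_0(\Gamma)$, whereas the algebra of convergent scalar sequences --- scattered, yet with the evaluation at the limit a norm-attaining functional that is \emph{not} a point of weak$^\ast$-weak continuity --- shows that scatteredness alone is insufficient.

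Granting the structure theorem the conclusion is immediate: each $\mathcal{K}(H_\alpha)$ has the weak Namioka--Phelps property by \cite{LM}, and the Proposition above shows a $c_0$-sum of such spaces inherits it, so $A$ has the weak Namioka--Phelps property. I expect the structural passage to be the main obstacle, in particular the ruling out of accumulation of the summands. A concrete handle on what must be proved is the following SSD reduction, which I would use either as the engine of the argument or as a sanity check: if $\phi\in S_{A^\ast}$ attains its norm at $x\in S_A$ and $\phi_\alpha\to\phi$ weak$^\ast$ with $\|\phi_\alpha\|\to 1$, then $\phi_\alpha(x)\to 1$, so SSD at $x$ (strong exposure of the face $S_x$ by $x$) already yields $d(\phi_\alpha,S_x)\to 0$; this reduces the weak Namioka--Phelps property at $\phi$ to upgrading weak$^\ast$ to norm convergence \emph{within} the face $S_x$. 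That last upgrade is exactly what fails without the global hypothesis --- witness the densities $(1+\sin 2\pi n x)\,dx$ converging weak$^\ast$ to Lebesgue measure in $C[0,1]^\ast$ with constant norm $1$ but not in norm --- and it is the atomic $c_0$-sum structure that is designed to supply it.
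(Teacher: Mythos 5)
Your overall plan coincides with the paper's proof: the reverse implication is immediate (weak$^\ast$-norm continuity implies weak$^\ast$-weak continuity), and for the forward implication both you and the paper reduce to showing that a weakly Hahn--Banach smooth $C^\ast$-algebra is isometric to a $c_0$-sum $\bigoplus_{c_0}\mathcal{K}(H_\alpha)$, after which the result of Lau and Mah \cite{LM} for $\mathcal{K}(H)$ together with the Proposition on $c_0$-sums finishes the argument. Your reduction of the weak Namioka--Phelps property at a norm-attaining $\phi$ to a norm-upgrade inside the face $S_x$, and your supporting examples (the space $c$ of convergent sequences showing scatteredness alone is insufficient; the oscillating densities in $C[0,1]^\ast$ showing the upgrade is not automatic), are all correct.

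The gap is that the step you yourself call ``the heart of the proof'' --- that weak Hahn--Banach smoothness forces $A$ to be a dual $C^\ast$-algebra --- is never proved; you only describe what an argument would have to overcome (ruling out accumulation of the minimal central projections) and explicitly defer it as ``the main obstacle.'' As written, the proposal is therefore not a proof. What you appear to have missed is that this structural statement is exactly what Theorem 2 of \cite{CP} supplies: that theorem does not merely give the equivalence of weak Hahn--Banach smoothness with strong subdifferentiability of the norm (the only part you extract from it); it characterizes such $C^\ast$-algebras as precisely the $c_0$-sums of elementary algebras $\mathcal{K}(H)$. (The paper's remark in the Introduction notes that although \cite{CP} formally assumes coincidence of the weak$^\ast$ and weak topologies on all of $S_{A^\ast}$, their proof only uses this at norm-attaining functionals, i.e., weak Hahn--Banach smoothness suffices.) So the structural passage you flag requires no new argument: invoking Theorem 2 of \cite{CP} in full closes the gap and turns your outline into the paper's proof.
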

\begin{proof}
	Suppose $A$ has the weak Hahn-Banach smooth property. By our remarks in the Introduction and by Theorem 2 in \cite{CP} we have that $A$ is isometric to a $c_0$ sum of a family of spaces of the type ${\mathcal K}(H)$ for a family of Hilbert spaces. Since each component space has the Namioka-Phelps property, the conclusion follows from the above proposition.
\end{proof}
In the next example we illustrate the commutative case.
\begin{ex}
	Let $\Omega$ be a locally compact Hausdorff space. Suppose the supremum norm on $C_0(\Omega)$ is SSD at all unit vectors. Then $\Omega$ is a discrete set.
	\vskip 1em
	To see this first assume every singleton in $\Omega$ is a $G_{\delta}$. For $\omega \in \Omega$, let $f \in C_0(\Omega)$, $0 \leq f \leq 1$, $f^{-1}(1)=\{\omega\}$. Thus $S_f = \{\delta({\omega})\}$. Since $f$ is a SSD point, $f$ strongly exposes $\delta(\omega)$. Thus $\{\omega\}$ is an isolated point. Hence $\Omega$ is a discrete set. In the general case, as SSD of norm is a hereditary property, we apply the arguments to a separable self-adjoint sub-algebra to get the required conclusion.
\end{ex}
\begin{thm}
	Let $X$ be an infinite dimensional $L^1$-predual space. If the norm is SSD then $X^\ast$ is isometric to $\ell^1(\Gamma)$ for an infinite discrete set $\Gamma$.
\end{thm}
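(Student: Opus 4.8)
The plan is to reduce everything to a statement about the positive measure $\mu$ for which $X^\ast$ is isometric to $L^1(\mu)$. Since $L^1(\mu)$ is isometric to $\ell^1(\Gamma)$ exactly when $\mu$ is purely atomic --- equivalently, exactly when $L^1(\mu)$ has the Radon--Nikodym property --- and since $\dim X = \infty$ forces the atom set $\Gamma$ to be infinite (a finite $\Gamma$ would make $X^\ast$, hence $X$, finite dimensional), the entire content is the implication: if the norm of the $L^1$-predual $X$ is SSD, then $\mu$ is purely atomic. So I would assume, towards a contradiction, that $\mu$ has a non-trivial non-atomic part and manufacture a unit vector at which the norm fails to be SSD.

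For the mechanism I would work inside the isometric embedding $X \subseteq X^{\ast\ast} = L^\infty(\mu)$, so that a unit vector $x \in S_X$ is a function with $\|x\|_\infty = 1$; by weak$^\ast$-compactness of the dual ball the defining supremum is attained, and a direct computation identifies $S_x$ as the set of sign-normalized densities in $L^1(\mu)$ carried by $\{|x| = 1\}$ (non-negative where $x = 1$, non-positive where $x = -1$). This is precisely the situation exploited in the Example: if the value $1$ is approached by $x$ from below along sets of positive measure, that is $\mu(\{1 - \delta < x < 1\}) > 0$ for every $\delta > 0$, then the normalized densities $g_n = \mu(A_n)^{-1}\chi_{A_n}$ on $A_n = \{1 - 1/n \le x < 1\}$ satisfy $g_n(x) \to 1$ while remaining mutually singular with every element of $S_x$, so that $d(g_n, S_x) = 2$ and $x$ is not an SSD point. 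Thus SSD at every point forces each $x \in S_X$ to possess a spectral gap beneath its extreme values.

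The crux --- and the step I expect to be the main obstacle --- is to produce, out of a non-atomic part of $\mu$, a single genuine element $x$ of the predual $X$ (not merely of $L^\infty(\mu)$) whose essential range accumulates at its maximum along a non-null set, thereby triggering the failure above. Here the $L^1$-predual structure must be used essentially: a non-atomic part of $\mu$ produces a copy of a non-atomic $L^\infty$ inside $X^{\ast\ast} = L^\infty(\mu)$ and correspondingly non-isolated points of the spectrum, but one must descend to an actual member of $X$ realizing the forbidden continuous approach. The route I would pursue is to extract, by heredity of the SSD property, a separable sub-object of $X$ on which the non-atomicity survives and which is again amenable to the commutative $C_0(\Omega)$ analysis of the Example; the difficulty is that an arbitrary subspace of an $L^1$-predual need not retain the special structure, so the reduction has to be engineered to preserve simultaneously the $L^1$-predual form and the non-atomicity of the dual. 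Once such an $x$ is found, the contradiction with SSD forces $\mu$ to be purely atomic, whence $X^\ast$ is isometric to $\ell^1(\Gamma)$ with $\Gamma$ infinite, as required.
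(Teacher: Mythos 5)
Your overall target is the same as the paper's (reduce to showing $X^\ast$ has the Radon--Nikodym property, i.e.\ $\mu$ is purely atomic, with infiniteness of $\Gamma$ forced by $\dim X=\infty$), and your mechanism for contradicting SSD --- normalized densities $g_n$ carried by $\{1-1/n\le x<1\}$, which are mutually singular with every element of $S_x$ and hence stay at $L^1$-distance $2$ from it while $g_n(x)\to 1$ --- is sound. But the proposal has a genuine gap, and you have named it yourself: you never produce the witness $x\in X$ (as opposed to $x \in L^\infty(\mu)$) whose essential range accumulates at its maximum along a non-null set. Everything hinges on that step, and ``the reduction has to be engineered to preserve simultaneously the $L^1$-predual form and the non-atomicity of the dual'' is a statement of the problem, not a proof. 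Without a concrete device for descending from the non-atomic part of $\mu$ to an actual element of the predual, the argument does not close.

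The paper closes exactly this hole with two structure theorems for $L^1$-preduals, and in doing so never needs your density computation at all. First, every separable subspace $Y\subset X$ is contained in a \emph{separable $L^1$-predual} subspace $Z$ with $Y\subset Z\subset X$ (this answers your worry that arbitrary subspaces lose the special structure). Second --- and this is the key fact your proposal is missing --- a separable $L^1$-predual with non-separable dual contains an \emph{isometric copy of $C(\Delta)$}, $\Delta$ the Cantor set (Lacey, Theorem 4, p.~226). Since SSD of the norm is hereditary and Example 4 of the paper shows that SSD of the sup norm on $C_0(\Omega)$ at all unit vectors forces $\Omega$ to be discrete, the copy of $C(\Delta)$ is impossible; hence every separable subspace of $X$ has separable dual, so $X^\ast$ has the Radon--Nikodym property, and an $L^1(\mu)$-space with that property is isometric to $\ell^1(\Gamma)$. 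Note the pivot: the paper never manipulates the non-atomic part of $\mu$ directly; it converts the question into a separability/RNP statement, and the $C(\Delta)$ embedding theorem supplies, in effect, a whole algebra of witnesses so that one application of the already-proved Example finishes the proof. If you want to rescue your more hands-on route, that embedding theorem (or some equally strong classification of separable $L^1$-preduals) is the ingredient you would have to import anyway.
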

\begin{proof}
	Let $Y \subset X$ be a separable subspace. By Lemma 6 on page 227 in \cite{LM} we get that there is a separable $L^1$-predual space $Z$ such that $Y \subset Z \subset X$. If $Z^\ast$ is not separable, by Theorem 4 on page 226 of \cite{L} $Z$ and hence $X$ contains an isometric copy of continuous functions on the Cantor set. Since norm SSD is a hereditary property, in view of Example 4, we get a contradiction. Therefore $Y^\ast$ is a separable space. As before we conclude that $X^\ast$ has the Radon-Nikodym property. Since $X^\ast = L^1(\mu)$, this implies $X^\ast$ is isometric to $\ell^1(\Gamma)$ for some discrete set $\Gamma$.
\end{proof}
We recall from \cite{GI} that the norm is said to be quasi-polyhedral, if for $x \in S_X$, there is a $\delta >0$ such that $y \in S_X$, $\|y-x\|< \delta$ implies $S_y \in S_X$. Here again we note that it is enough to show that extreme points of the weak$^\ast$-closed face,  $S_y$ are in $S_x$. It is easy to see that in this case the norm is SSD. On the other hand since in any finite dimensional space the norm is SSD, the converse need not be true.
\vskip 1em
\begin{ex}
	Let $\Omega$ be a compact set and let $E \subset \Omega$ be a clopen set. If $\|f\|=1$ and $\|f -\chi_{E}\|<1$, then if $f(\omega)=1$ then $\omega \in E$. Thus $S_f \subset S_{\chi_E}$. Hence $\chi_E$ is a quasi-polyhedral point.
\end{ex}
The next theorem shows that if the norm on a $L^1$-predual space is SSD, then it is quasi-polyhedral.
\begin{thm}
Let $X$ be a $L^1$-predual space. If the norm is SSD then it is quasi polyhedral. Hence it is a quasi polyhedral predual of $\ell^1(\Gamma)$ for some discrete set $\Gamma$.	
\end{thm}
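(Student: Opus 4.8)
The plan is to transfer the problem to the concrete coordinate picture supplied by the previous theorem, and then to exploit the uniform separation of the extreme points of the dual ball of $\ell^1(\Gamma)$. First I would invoke the previous theorem to identify $X^\ast$ isometrically with $\ell^1(\Gamma)$ for a discrete set $\Gamma$ (the finite-dimensional case being $\ell^\infty_n$, which is handled identically). The canonical embedding then realizes $X$ as an isometric subspace of $X^{\ast\ast} = \ell^\infty(\Gamma)$, so that each $x \in X$ is a bounded family $(x(\gamma))_{\gamma \in \Gamma}$ with $\|x\| = \sup_\gamma |x(\gamma)|$, the coordinate functionals act by $e_\gamma(x) = x(\gamma)$, and $\{\pm e_\gamma : \gamma \in \Gamma\}$ is exactly the set of extreme points of $X^\ast_1$. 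A direct computation with the pairing $x^\ast(x) = \sum_\gamma x^\ast(\gamma)x(\gamma)$ shows that, for a fixed unit vector $x$, the state space $S_x$ consists precisely of those $x^\ast \in S_{X^\ast}$ supported on $\{\gamma : |x(\gamma)| = 1\}$ with $\mathrm{sign}(x^\ast(\gamma)) = \mathrm{sign}(x(\gamma))$ there; in particular the extreme points of $S_x$ are $\{e_\gamma : x(\gamma)=1\} \cup \{-e_\gamma : x(\gamma)=-1\}$.

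The heart of the argument is a separation estimate. If $|x(\gamma_0)| < 1$ then every element of $S_x$ vanishes at $\gamma_0$, so for each $x^\ast \in S_x$ we have $\|e_{\gamma_0} - x^\ast\|_1 = 1 + \|x^\ast\|_1 = 2$, giving $d(e_{\gamma_0}, S_x) = 2$. Now suppose, toward a contradiction, that SSD at $x$ did not produce a gap, i.e.\ that for every $n$ there were an index $\gamma_n$ with $x(\gamma_n) \in (1 - 1/n, 1)$. These values tend to $1$ while remaining below $1$, which forces infinitely many of the $\gamma_n$ to be distinct; passing to a subsequence we may assume the indices are pairwise distinct. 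Then $e_{\gamma_n}(x) = x(\gamma_n) \to 1$, yet $d(e_{\gamma_n}, S_x) = 2 \not\to 0$, contradicting the fact that $x$ strongly exposes $S_x$. Hence there is a $\delta > 0$ such that $x(\gamma) \in (1 - \delta, 1]$ forces $x(\gamma) = 1$, and by the symmetric argument $x(\gamma) \in [-1, -1 + \delta)$ forces $x(\gamma) = -1$.

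Quasi-polyhedrality at $x$ now follows with this $\delta$. As noted in the discussion preceding the clopen example, it suffices to check that the extreme points of $S_y$ lie in $S_x$ whenever $y \in S_X$ satisfies $\|y - x\| < \delta$. If $e_\gamma$ is such an extreme point, then $y(\gamma) = 1$ and $|x(\gamma) - y(\gamma)| < \delta$, so $x(\gamma) \in (1 - \delta, 1]$ and therefore $x(\gamma) = 1$, i.e.\ $e_\gamma \in S_x$; the case of $-e_\gamma$ is identical. Thus $S_y \subseteq S_x$, so $x$ is a quasi-polyhedral point, and since $x \in S_X$ was arbitrary the norm is quasi-polyhedral. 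Together with the previous theorem this displays $X$ as a quasi-polyhedral predual of $\ell^1(\Gamma)$.

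I expect the main obstacle to lie in the second paragraph: pinning down the support description of $S_x$ and the resulting distance identity $d(e_{\gamma_0}, S_x) = 2$, and then arguing carefully that a failure of the gap yields a sequence of genuinely distinct indices on which strong exposedness can be contradicted. Once the gap is secured, the passage to quasi-polyhedrality is routine.
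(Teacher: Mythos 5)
Your proof is correct, but it takes a genuinely different route from the paper. The paper never passes to the $\ell^1(\Gamma)$ coordinates for the quasi-polyhedrality claim: instead it lifts $x$ to the bidual, using $X^\ast = L^1(\mu)$ to write $X^{\ast\ast} = C(K)$ with $K$ hyperstonean, notes that $x$ remains a SSD point of $X^{\ast\ast}$, invokes Theorem 1 of \cite{CP} to conclude that the set of extreme points of $S_x$ (viewed in $C(K)$) is clopen, and then applies the clopen-set example (Example 7) to get that $x$ is quasi-polyhedral in $C(K)$, hence in $X$; the identification $X^\ast \cong \ell^1(\Gamma)$ from the preceding theorem enters only in the final ``hence'' clause. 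You instead use that identification at the outset, embed $X$ into $\ell^\infty(\Gamma)$, and replace the appeal to \cite{CP} by a self-contained coordinate argument: strong exposure of $S_x$ by $x$ forces a uniform gap (no coordinate $x(\gamma)$ lies in $(1-\delta,1)$ or $(-1,-1+\delta)$), via the distance identity $d(e_{\gamma_0},S_x)=2$ when $|x(\gamma_0)|<1$, and the gap plus the paper's extreme-point reduction yields $S_y \subseteq S_x$. Your gap statement is in fact the concrete $\ell^\infty(\Gamma)$ analogue of the clopenness assertion the paper imports from \cite{CP}. What your route buys is elementarity and transparency: no hyperstonean representation and no external structure theorem, only Krein--Milman and the description of extreme points of the $\ell^1$-ball. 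What it costs is that quasi-polyhedrality now depends logically on the previous theorem (whose proof needs the Lazar--Lindenstrauss lemma, the Cantor-set dichotomy and the Radon--Nikodym property), whereas the paper's argument establishes quasi-polyhedrality for an arbitrary $L^1$-predual directly from SSD, independently of that machinery. Two small remarks: the digression about the indices $\gamma_n$ being distinct is unnecessary, since the contradiction only needs $e_{\gamma_n}(x)\to 1$ together with $d(e_{\gamma_n},S_x)=2$; and your final step tacitly uses that $S_y$ is the weak$^\ast$-closed convex hull of its extreme points while $S_x$ is weak$^\ast$-closed and convex --- this is exactly the reduction the paper sanctions just before Example 7, which you correctly cite.
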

\begin{proof}
	Let $x \in S_X$ . By hypothesis it follows that $x$ is a SSD point of $X^{\ast\ast}$. Since $X^\ast=L^1(\mu)$, we have $X^{\ast\ast} = C(K)$, for some Compact Hyperstonean space $K$ (see \cite{L} page 95) . We have from Theorem 1 of \cite{CP} that the set of extreme points of $S_x$ (now considered in $C(K)$) is a clopen set. Therefore by Example 7, we get that $x$ is a quasi-polyhedral point for the norm on $C(K)$. Hence $x$ is a quasi-polyhedral point for the norm on $X$. Therefore $X$ is a quasi-polyhedral space.
	
\end{proof}
The following corollary is now easy to prove.
\begin{cor}
	Let $K$ be a compact Hausdorff space. The norm $C(K)$ is polyhedral at a dense set of points in $S_{C(K)}$ if and only if $K$ is a totally disconnected space (i.e., $K$ has a base consisting of clopen sets). See Theorem 3 of \cite{CP}.
\end{cor}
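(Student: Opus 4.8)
The plan is to reduce quasi-polyhedrality in $C(K)$ to a clopen level-set condition and then read off both implications. First I would show that $f \in S_{C(K)}$ is quasi-polyhedral if and only if its peak set $\{|f|=1\}$ is clopen (equivalently, by Theorem 1 of \cite{CP}, that $f$ is an SSD point). For the easy direction, if $\{|f|=1\}$ is clopen then its complement is compact and $|f|<1$ there, so $|f|$ attains a maximum $1-\delta_0<1$ off the peak set; then for $g\in S_{C(K)}$ with $\|g-f\|<\delta_0$ every point where $g=\pm 1$ must lie in $\{|f|=1\}$ with the matching sign, whence the extreme points of $S_g$ (the evaluations $\pm\delta_\omega$ at points where $g=\pm 1$) all lie in $S_f$, i.e. $S_g\subseteq S_f$. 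This generalizes Example 7. For the converse I would argue contrapositively: if $\{f=1\}$ fails to be open, pick a peak point approached by points $\omega$ with $f(\omega)\in(1-\delta,1)$, take a Urysohn function $\psi$ supported in $\{f>1-\delta\}\setminus\{f=1\}$ with $\psi=1$ at such an $\omega$, and form $g=f(1-\psi)+\psi$; the estimates $-1\le g\le 1$ give $\|g\|=1$, while $\|g-f\|=\|\psi(1-f)\|<\delta$, yet $g$ attains the value $1$ off $\{f=1\}$, producing an extreme point of $S_g$ outside $S_f$ and contradicting quasi-polyhedrality. Thus in $C(K)$ the quasi-polyhedral points are exactly the functions with clopen peak set.

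For the implication that total disconnectedness forces a dense set of quasi-polyhedral points, I would use that the clopen sets then form a base, so by compactness every $f\in S_{C(K)}$ is a uniform limit of simple functions $s=\sum_i c_i\chi_{E_i}$ subordinate to finite clopen partitions $\{E_i\}$. Normalizing $s$ by its norm produces a function of the same form with sup-norm $1$ whose peak set is the union of those $E_i$ on which the normalized coefficient has modulus $1$, hence clopen; by Step~1 it is quasi-polyhedral. Since the normalization moves $s$ by at most $|\,\|s\|-1\,|\le\|s-f\|$, these quasi-polyhedral points converge uniformly to $f$, so they are dense in $S_{C(K)}$.

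For the reverse implication, Step~1 turns the claim into the statement that the SSD points are dense in $S_{C(K)}$ precisely when $K$ is totally disconnected, which is Theorem 3 of \cite{CP}; so the corollary follows. For a self-contained argument I would prove the contrapositive: if $K$ has a nondegenerate connected component $C$, then any quasi-polyhedral $f$ restricts on $C$ either to $\pm 1$ (a clopen peak set being a union of components) or to a function with $\max_C|f|<1$, and I would choose $h\in S_{C(K)}$ whose peak set lies in $C$ while $h|_C$ is non-constant, so that every quasi-polyhedral $f$ stays a fixed distance from $h$. \emph{The main obstacle lies exactly here}: since a nondegenerate component typically has empty interior, one cannot keep $|h|$ bounded away from $1$ just off $C$, so the peak of an approximating $f$ may drift off $C$; ruling this out—equivalently, forcing the clopen peak set of a nearby $f$ to meet $C$—is the crux. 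I would resolve it as in Example 4 by passing to a separable, hence metrizable, self-adjoint subalgebra, where $C$ maps onto a nondegenerate metric continuum and a peak function with $\{|h|=1\}\subseteq C$ exists as a zero set, or else simply invoke Theorem 3 of \cite{CP}.
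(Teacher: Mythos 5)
Your proposal is correct, and it is substantially more self-contained than what the paper does. The paper gives no written argument at all: the corollary is declared easy because it follows by combining Theorem 8 (in an $L^1$-predual every SSD point is quasi-polyhedral, proved there via Theorem 1 of \cite{CP} and Example 7) with the trivial implication that quasi-polyhedral points are SSD points, and then appealing to Theorem 3 of \cite{CP} for the equivalence between density of SSD points and total disconnectedness. Your Step 1 --- $f$ is quasi-polyhedral in $C(K)$ if and only if $\{|f|=1\}$ is clopen --- is exactly the commutative core of that machinery, but you prove both implications directly: the sufficiency is Example 7 with $\chi_E$ replaced by an arbitrary $f$, and the Urysohn perturbation $g=(1-\psi)f+\psi$ is a clean elementary proof of the necessity, which the paper never writes down. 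Your Step 2 then establishes ``$K$ totally disconnected $\Rightarrow$ polyhedral points dense'' constructively (clopen simple functions, then renormalization, with the correct estimate that normalizing moves $s$ by $|\,\|s\|-1\,|\le\|s-f\|$), so for this direction you do not need \cite{CP} at all, whereas the paper obtains it only through the cited theorem. For the reverse direction you fall back on Theorem 3 of \cite{CP}, which is precisely what the paper itself does (the citation is built into the statement of the corollary), so the admitted incompleteness of your attempted component-based argument costs nothing; indeed your diagnosis of the obstruction --- a nondegenerate component generally has empty interior, so the clopen peak set of a nearby quasi-polyhedral function can drift off the component --- correctly identifies why that direction is the genuinely hard part and is reasonably left to \cite{CP}. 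In sum, your route proves more than the paper records (an explicit characterization of the quasi-polyhedral points of $C(K)$, re-deriving the commutative case of Theorem 1 of \cite{CP}, plus a constructive density argument), at the cost of length; the paper's route is shorter but rests entirely on \cite{CP} and its own Theorem 8.
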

We recall that a function algebra (over complex scalars) $A$ is a closed, point separating  subalgebra, containing constants of $C(\Omega)$ for a compact set $\Omega$. Next theorem is another commutative version of Theorem 2 in \cite{CP}.
\begin{thm}
	Let $A$ be a function algebra which is weakly Hahn-Banach smooth. Then $A$ is isometric to $\ell^{\infty}(n)$ for some positive integer $n$.
\end{thm}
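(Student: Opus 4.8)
The plan is to realize $A$ isometrically as a uniform algebra on its Shilov boundary $\partial A$ (the closure of the Choquet boundary), and to exploit that every evaluation $\delta_x$ at a point $x \in \partial A$ is norm-attaining, since $\delta_x(1)=1=\|1\|=\|\delta_x\|$ at the constant function $1 \in A$. Thus the weak Hahn--Banach smoothness hypothesis makes every such $\delta_x$ a point of weak$^\ast$--weak continuity for the identity on $S_{A^\ast}$. Recalling that the Choquet (strong) boundary points are dense in $\partial A$ and that the extreme points of $A^\ast_1$ are the unimodular multiples $\lambda\,\delta_x$ of evaluations at Choquet points, the whole problem reduces to showing that $\partial A$ is a finite set: once this is known, $A$ sits as a point-separating, constant-containing subalgebra of the finite-dimensional space $C(\partial A)$, so Stone--Weierstrass forces $A=C(\partial A)=\ell^\infty(n)$ with $n=|\partial A|$.

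First I would show that every Choquet boundary point $x_0$ is isolated in $\partial A$. The key computation is that the bounded Borel function $\chi_{\{x_0\}}$ defines an element $\Phi_{x_0}$ of $A^{\ast\ast}=(A^\perp)^{\perp}\subset C(\partial A)^{\ast\ast}$; equivalently, $\mu(\{x_0\})=0$ for every $\mu\in M(\partial A)$ annihilating $A$. This follows from the peaking property of a strong boundary point: given a small neighbourhood $U$ of $x_0$ with $|\mu|(U)$ small, choosing $f\in A$ with $f(x_0)=1=\|f\|$ and $|f|$ small off $U$, and expanding $0=\int f\,d\mu$, forces $\mu(\{x_0\})=0$. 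Granting this, if a net $x_\gamma\to x_0$ in $\partial A$ with $x_\gamma\neq x_0$ existed, then $\delta_{x_\gamma}\to\delta_{x_0}$ weak$^\ast$, while $\Phi_{x_0}(\delta_{x_\gamma})=0\not\to 1=\Phi_{x_0}(\delta_{x_0})$ shows this convergence is not weak, contradicting weak$^\ast$--weak continuity at $\delta_{x_0}$.

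Next I would rule out limit points of $\partial A$ that are not Choquet points. An isolated strong boundary point $x$ is a genuine peak point whose peak function $f$ satisfies $\sup_{\partial A\setminus\{x\}}|f|<1$, so $f^{n}\to\chi_{\{x\}}$ uniformly and hence $\chi_{\{x\}}\in A$. Now suppose $y\in\partial A$ is non-isolated; by density of the Choquet boundary I may pick distinct isolated Choquet points $x_\gamma\to y$. Again $\delta_{x_\gamma}\to\delta_y$ weak$^\ast$, hence weakly by continuity at the norm-attaining $\delta_y$, so by Mazur's theorem a finite convex combination $\nu=\sum_i\lambda_i\delta_{x_{\gamma_i}}$ lies within $\varepsilon$ of $\delta_y$ in $A^\ast$. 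Testing against $g=\sum_i\theta_i\chi_{\{x_{\gamma_i}\}}\in A$ with unimodular $\theta_i$ aligning the phases of the $\lambda_i$ gives $\|g\|=1$, $\nu(g)=\sum_i|\lambda_i|=1$ and $\delta_y(g)=g(y)=0$, so $|\nu(g)-\delta_y(g)|=1>\varepsilon$, a contradiction. Hence $\partial A$ has no non-isolated point; being compact it is finite, and the reduction above completes the proof.

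The main obstacle I anticipate is the bidual identification in the first step --- verifying that $\chi_{\{x_0\}}$ annihilates $A^\perp$, so that it is a legitimate element of $A^{\ast\ast}$ against which to test weak convergence --- since this is exactly where the function-algebra structure, through the existence of peaking functions at Choquet boundary points, is indispensable. A secondary delicate point is the passage from the bidual to honest idempotents $\chi_{\{x\}}\in A$ at isolated boundary points in the second step, which is what upgrades the weak-topology information into a genuine norm contradiction via Mazur's theorem.
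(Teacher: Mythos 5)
Your proof is correct, but it takes a genuinely different route from the paper's. The paper argues dually and abstractly: weak Hahn--Banach smoothness combined with Mazur's theorem, the Krein--Milman theorem and the Bishop--Phelps theorem shows that $A^\ast_1$ is the norm-closed convex hull of its extreme points; since those extreme points arise from weak peak points, each one spans an $L$-summand of $A^\ast$, whence $A^\ast$ is isometric to $\ell^1(\Gamma)$; thus $A$ is an $L^1$-predual, and the facial characterization of complex Lindenstrauss spaces (Corollary 4.2 of \cite{ERR}) gives that $A$ is self-adjoint, after which Stone--Weierstrass and Example 4 finish the argument. You instead work concretely on the Shilov boundary: peaking functions at Choquet points show that annihilating measures have no atom there, so $\chi_{\{x_0\}}$ induces a legitimate element of $A^{\ast\ast}$ witnessing failure of weak$^\ast$-weak continuity at any non-isolated Choquet point; isolated Choquet points then yield idempotents $\chi_{\{x\}}\in A$, and Mazur's theorem converts the weak convergence $\delta_{x_\gamma}\to\delta_y$ at a putative non-isolated point $y$ into a norm approximation that these idempotents contradict; hence $\partial A$ is finite. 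Your route is more elementary and self-contained --- it bypasses the $\ell^1(\Gamma)$ dual structure, the $L^1$-predual theory, and the \cite{ERR} result entirely --- at the cost of invoking the standard (but nontrivial) uniform algebra facts that Choquet boundary points are strong boundary points and that the Choquet boundary is dense in the Shilov boundary; the paper's route, conversely, produces intermediate conclusions of independent interest (the identification $A^\ast\cong\ell^1(\Gamma)$ and self-adjointness) that connect this theorem to the $L^1$-predual results elsewhere in the paper.

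Two small repairs to your write-up. First, the closing appeal to Stone--Weierstrass is not quite right over complex scalars, where that theorem requires self-adjointness, which you do not have a priori; but it is also unnecessary: once $\partial A$ is finite, every point of $\partial A$ is an isolated Choquet point, so the idempotents $\chi_{\{x\}}$ you have already placed in $A$ span $C(\partial A)$, giving $A = C(\partial A)=\ell^\infty(n)$ directly. Second, the unimodular phases $\theta_i$ in your final step are superfluous: Mazur's theorem produces convex combinations, so the coefficients $\lambda_i$ are nonnegative reals and $\theta_i=1$ already gives $\nu(g)=1$.
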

\begin{proof}
	Since $A$ is weakly Hahn-Banach smooth, any norm attaining unit vector of $A^\ast$, by Mazur's theorem, in conjugation with the Krein-Milman theorem,  is in the norm closed convex hull of extreme points of $A^\ast_1$. Thus  by the Bishop-Phelps theorem, $A^\ast_1$ is the norm-closed convex hull of its its extreme points. Since extreme points come from weak-peak points, we see that for any extreme point $x^\ast \in A^\ast_1$,  $A^\ast = span\{x^\ast\}\bigoplus_1 N$ for some closed subspace $N$ (see \cite{HWW} page 5). If $T$ denotes the unit circle and $y^\ast$ is an extreme point not in $T\{x^\ast\}$, then $y^\ast \in N$. In particular $\|\alpha x^\ast+\beta y^\ast\|=|\alpha|+|\beta|$, for any scalars $\alpha,~\beta$.
	\vskip 1em
	It is easy to see that there is a set of extreme points $\Gamma$ such that no two vectors are dependent and $T\Gamma$ consists of all extreme points of the unit ball. It is now easy to see that $A^\ast$ is isometric to $\ell^1(\Gamma)$. $A$ is in particular a $L^1$-predual space. It now follows from Corollary 4.2 in \cite{ERR} that $A$ is self adjoint. Thus by Example 4 again we get that $A$ is isometric to $\ell^{\infty}(n)$.
\end{proof}

\end{document}